\newtheorem{theorem}{Theorem}
\newtheorem{lemma}{Lemma}
\title{Computing optimal strategy for cop in the game of Cop v.s. Gambler}
\author{Shen-Fu Tsai \\ parity@gmail.com}
\begin{document}
\maketitle
\begin{abstract}
	We present two efficient algorithms that compute the optimal strategy for cop in the game of Cop v.s.  Gambler where the gambler's strategy is not optimal but known to the cop. The first algorithm is analogous to Bellman–Ford algorithm\cite{bellman_ford} for single source shortest path problem and runs in $O(|V(G)||E(G)|)$ time. The second is analogous to Dijkstra's algorithm\cite{dijkstra} and runs in $O(|E(G)|+|V(G)|\log |V(G)|)$ time. Compared with each other, they are more suitable for sparse and dense graphs, respectively.
\end{abstract}

\section{Introduction}
In the probabilistic version of game of graph pursuit\cite{komarov_arxiv_2013}, a cop plays against a gambler on a graph $G$. Before the game starts, the cop picks and occupies a vertex from $G$. In each round of the game, the cop selects and moves to a adjacent vertex or stays at the same vertex, and the gambler chooses to occupy a vertex randomly based on a time-independent distribution, or {\it gamble},  known to the cop. The gambler is not restricted to only adjacent vertices.  Whenever they occupy the same vertex at the same time the cop wins.\\

It is known that on a connected graph $G$, the cop can win in less than or equal to $n=|V(G)|$ expected rounds, so the cop {\it will} win in exactly $n$ expected rounds because the optimal gamble for the gambler is uniform distribution. However, we note that the cop's strategy proposed in \cite{komarov_arxiv_2013} that guarantees $n$ expected chase time may not be optimal when a non-uniform gamble is employed. An example is a chain $v_0, v_1, v_2, v_3$ with $(p_0,p_1,p_2,p_3)=(0.3,0.7,0,0)$ and the cop starting at $v_0$. By the strategy in \cite{komarov_arxiv_2013}, the cop should stay at $v_0$ and expect to win in $10/3$ time. However if she moves to $v_1$ and stay there, the expected time is $1+0.7/0.7=2$.\\

\section{Optimal Cop Strategy}
We describe two algorithms to compute the optimal strategy for cop starting at {\it every} vertex $v\in V(G)$ given $G$ and gamble $\{p_v:v\in V(G)\}$, where $p_v$ is the probability that the gambler selects to occupy vertex $v$. Our algorithms work for both directed and undirected graphs.\\

We can always represent the optimal strategy for cop when arriving vertex $v$ as to the next vertex $u\in N(v)$ to move to in the next round, where $N(v)$ is the set of adjacent vertices of $v$ plus $v$ itself.\\

\subsection{$O(|V(G)||E(G)|)$ Algorithm}
\label{sec:bellman_ford_algorithm}
Analogous to Bellman-Ford algorithm\cite{bellman_ford} for single source shortest path problem, in each of the $O(|V(G)|)$ iterations we update the strategy and chase time for vertex $u$ based on $v$ for each edge $(u,v)\in E(G)$. It runs faster on sparse graphs than the next algorithm in Section \ref{sec:dijkstra_algorithm}.\\

\begin{algorithm}
\begin{algorithmic}
	\ForAll {$v\in V(G)$}
		\State {$T_1(v)=1/p_v$, $\pi(v)=v$}
	\EndFor

	\For {$i=2,3,\ldots$}
		\State {$update=False$}
		\ForAll {$v\in V(G)$}
			\State {$T_i(v)=T_{i-1}(v)$}
			\If {$T_{i}(v)>1+(1-p_v)\min_{u\in N(v)}T_{i-1}(u)$}
				\State {$T_{i}(v)=1+(1-p_v)\min_{u\in N(v)}T_{i-1}(u)$}
				\State {$\pi(v)=\arg\min_{u\in N(v)}T_{k-1}(u)$}
				\State {$update=True$}
			\EndIf
		\EndFor

		\If {not $update$}
			\State {return}
		\EndIf
	\EndFor
\end{algorithmic}
\end{algorithm}

\subsubsection{Analysis}
In this section we show the correctness and efficiency of our algorithm. First define $T(v)$ as the
optimal expected chase time when the cop enters vertex $v$.\\

\begin{lemma}
	For each $v\in V(G)$,
	$$
	T(v)=1+(1-p_v)\min_{w\in N(v)}T(w).
	$$
	Moreover if $u\in N(v)$ and $T(v)=1+(1-p_v)T(u)$ then moving to $u$ after entering $v$ is optimal.
\end{lemma}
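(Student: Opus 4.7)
The plan is to establish the recurrence by proving matching upper and lower bounds on $T(v)$, exploiting the fact that the gamble is independent across rounds so that the subgame faced by the cop upon entering any vertex $w$ is a fresh copy of the original optimization problem, with optimal value $T(w)$.

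For the upper bound, I fix an arbitrary $w \in N(v)$ and analyze the concrete strategy: enter $v$; if not captured in this round (which happens with probability $1-p_v$), move to $w$ and play optimally from $w$. Its expected chase time equals $p_v \cdot 1 + (1-p_v)(1 + T(w)) = 1 + (1-p_v)T(w)$, so $T(v)$, being an infimum over all strategies, is at most this quantity. Minimizing over $w \in N(v)$ yields $T(v) \le 1 + (1-p_v)\min_{w \in N(v)} T(w)$.

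For the lower bound, I take an arbitrary cop strategy $\sigma$ starting by entering $v$, and let $W \in N(v)$ denote the (possibly random, history-dependent) vertex $\sigma$ moves to in round two should no capture occur in round one. Conditioned on $W=w$, the continuation of $\sigma$ is a valid strategy for the subgame rooted at $w$, hence has expected length at least $T(w)$. Therefore the expected chase time of $\sigma$ is at least $p_v \cdot 1 + (1-p_v)(1 + \mathbb{E}[T(W)]) \ge 1 + (1-p_v)\min_{w \in N(v)} T(w)$. Taking the infimum over $\sigma$ gives the matching lower bound and the equality. The moreover clause is then immediate: if $u$ attains the minimum, the upper-bound strategy with $w=u$ realizes expected time exactly $T(v)$, so moving to $u$ after entering $v$ is optimal.

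The main obstacle is making the subgame reduction precise: that the continuation from entering $w$ has expected length at least $T(w)$ regardless of prior history. This rests on the gambler's per-round draw being identically distributed and independent of the past, so the conditional law of future draws coincides with the law in the original game at $w$ and any history-dependent continuation is an admissible strategy for that fresh subgame. Edge cases are absorbed by the conventions $(1-p_v)\cdot\infty = \infty$ when $p_v<1$ and $T(v)=\infty$ when $p_v=0$ and every neighbor has infinite value.
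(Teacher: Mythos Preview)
The paper states this lemma without proof, presumably treating the Bellman recurrence as an immediate consequence of the memoryless (i.i.d.) structure of the gamble. Your proposal is correct and supplies exactly what the paper omits: the upper bound via the explicit strategy ``move to $w$, then continue optimally'' giving cost $1+(1-p_v)T(w)$, and the lower bound by observing that, conditioned on no capture at $v$ and on the next vertex $W=w$, the continuation of any strategy $\sigma$ is an admissible strategy for the fresh subgame at $w$ and hence has expected length at least $T(w)$. The ``moreover'' clause follows just as you say.

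One minor remark: your phrase ``play optimally from $w$'' in the upper-bound step presumes the infimum defining $T(w)$ is attained. This is true here (on a finite graph, stationary Markov strategies suffice, and the paper's later results confirm an optimal $\pi$ exists), but if you prefer not to rely on that forward reference you can use an $\varepsilon$-optimal continuation and let $\varepsilon\to 0$; the inequality $T(v)\le 1+(1-p_v)T(w)$ survives the limit.
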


The following lemma says that our algorithm will never obtain any $T_{i}(v)$ that is smaller than
$T(v)$.
\begin{lemma}
	\label{lemma:achievable}
	For each $v\in V(G)$ and $i\geq 0$, $T_{i}(v)\geq T(v)$.
\end{lemma}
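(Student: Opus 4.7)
The plan is to prove Lemma~\ref{lemma:achievable} by induction on $i$, using Lemma~1 (the Bellman-type recursion for $T$) as the key algebraic engine in the inductive step.

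For the base case $i=1$, I would argue that $T_1(v) = 1/p_v$ is achievable by the cop, namely by the ``stand still at $v$'' strategy: the number of rounds until the gambler first lands on $v$ is geometric with parameter $p_v$, so its expectation is $1/p_v$. Since $T(v)$ is by definition the infimum over \emph{all} strategies, we get $T(v) \le 1/p_v = T_1(v)$. (The case $p_v = 0$ is vacuous since $T_1(v) = +\infty$.) I would also note $T_0(v)$ is not really used, or treat $i\ge 1$ as the base.

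For the inductive step, assume $T_{i-1}(u) \ge T(u)$ for every $u \in V(G)$. Inspecting the algorithm, $T_i(v)$ is assigned one of two values: either it keeps the old value $T_{i-1}(v)$, or it is overwritten with $1 + (1-p_v)\min_{u \in N(v)} T_{i-1}(u)$. In the first case, $T_i(v) = T_{i-1}(v) \ge T(v)$ directly by induction. In the second case, monotonicity of the $\min$ under the inductive hypothesis gives
\[
T_i(v) \;=\; 1 + (1-p_v)\min_{u \in N(v)} T_{i-1}(u) \;\ge\; 1 + (1-p_v)\min_{u \in N(v)} T(u) \;=\; T(v),
\]
where the final equality is exactly Lemma~1. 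Combining the two cases closes the induction.

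I do not expect a real obstacle here; the argument is essentially mechanical once Lemma~1 is in hand. The only mild subtlety is justifying $T(v) \le 1/p_v$ in the base case (which requires the informal but standard observation that the cop is allowed to remain stationary, so the trivial strategy gives a valid upper bound on $T(v)$), and handling the degenerate case $p_v = 0$ where both sides may be $+\infty$.
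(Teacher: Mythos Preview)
Your proof is correct. Both you and the paper argue by induction on $i$ with the same base case (the stay-put strategy gives expected time $1/p_v$, hence $T(v)\le 1/p_v$), but the inductive steps differ in flavor. The paper maintains the stronger invariant that each $T_i(v)$ is \emph{achievable}---i.e., there is an explicit cop strategy whose expected chase time from $v$ equals $T_i(v)$---and then appeals to the definition of $T(v)$ as an infimum; this is constructive and does not invoke Lemma~1 at all. You instead bound $T_i(v)$ directly by the algebraic chain $T_i(v)\ge 1+(1-p_v)\min_{u\in N(v)}T(u)=T(v)$, using Lemma~1 for the last equality. Your route is shorter and purely mechanical once Lemma~1 is granted; the paper's route is more self-contained (it does not rely on Lemma~1, which the paper states without proof) and has the side benefit of exhibiting, for each $i$, an actual strategy realizing $T_i(v)$.
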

\begin{proof}
	We prove the lemma by showing that every $T_{i}(v)$ is {\it achievable}. For $i=0$ it is
	true, because $T_0(v)=1/p_v$ is the expected chase time for the cop who stays at vertex $v$
	forever. Assume $T_{k-1}(v)$ is achievable,
	$$
	T_{k}(v)=1+(1-p_v)\min_{u\in N(v)}T_{k-1}(u).
	$$
	By induction, $1+(1-p_v)\min_{u\in N(v)}T_{k-1}(u)$ on the right hand side is the upper
	bound of chase time if the cop moves from vertex $v$ to $\arg\min_{u\in N(v)}T_{k-1}(u)$ when she doesn't capture the
	gambler in the current round. Therefore $T_{k}(v)$ is achievable too.
\end{proof}

Define a {\it chase path} as a path $\{v_0,v_1,\ldots,v_k\}$ with
$$
T(v_i)=1+(1-p_{v_i})T(v_{i+1}), v_{i+1}\in N(v_i)
$$
for each $i\in [0,k-1]$ and
$$
T(v_k)=1+(1-p_{v_k})T(v_{k})
$$
Clearly a chase path starting from $v$ and ending at $u$ is an optimal path for cop to start
chasing the gambler at $v$. Ending at $u$ implies that it is optimal for cop to stay at $u$
forever.\\

\begin{lemma}
	If $\{v_0,v_1,\ldots,v_k\}$ is a chase path, then so is $\{v_1,\ldots,v_k\}$.
\end{lemma}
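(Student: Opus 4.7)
The plan is simply direct verification against the definition, since the three requirements for $\{v_1,\ldots,v_k\}$ to be a chase path are each already included among the hypotheses on $\{v_0,v_1,\ldots,v_k\}$.

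I would reindex the candidate sub-path as $w_j := v_{j+1}$ for $j \in [0,k-1]$ and check each clause of the chase-path definition. First, the adjacency condition $w_{j+1}\in N(w_j)$ for $j\in [0,k-2]$ is just $v_{i+1}\in N(v_i)$ for $i\in [1,k-1]$, a subset of the adjacency relations already assumed. Second, the recurrence $T(w_j)=1+(1-p_{w_j})T(w_{j+1})$ for $j\in [0,k-2]$ is the equation $T(v_i)=1+(1-p_{v_i})T(v_{i+1})$ for $i\in [1,k-1]$, which is again a subset of the relations assumed for the original chase path. Finally, the terminal self-loop condition $T(w_{k-1})=1+(1-p_{w_{k-1}})T(w_{k-1})$ is literally the given terminal condition $T(v_k)=1+(1-p_{v_k})T(v_k)$.

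Since $\{v_1,\ldots,v_k\}$ is the tail of a path and therefore still a path, and the three defining conditions of a chase path all hold by restriction, the sub-path is itself a chase path. There is no real obstacle here: the lemma is purely a bookkeeping observation that the chase-path property is preserved under deleting an initial vertex, which is exactly what later arguments (e.g.\ inductive analysis of the convergence in $i$ iterations) will want to invoke.
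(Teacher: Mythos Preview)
Your proof is correct; the paper itself omits any argument for this lemma, treating it as immediate from the definition, and your direct verification is exactly the routine check that justifies that omission.
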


A chase path starting from $v$ is a {\it shortest chase path} if these is no shorter chase path that starts from $v$. We say a vertex $v$ has {\it shortest chase length} $k$ if the shortest chase path starting from $v$ has length $k$.

\begin{lemma}
	\label{lemma:prefix}
	If $\{v_0,v_1,\ldots,v_k\}$ is a shortest chase path, then so is $\{v_1,\ldots,v_k\}$.
\end{lemma}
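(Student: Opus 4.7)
The plan is to argue by contradiction, leveraging the immediately preceding (unlabeled) lemma which already gives us that $\{v_1,\ldots,v_k\}$ is a chase path. What remains is purely the \emph{shortest} part.

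First I would note the following local observation about the chase path conditions: whether the augmented sequence $\{v_0, w_1, w_2, \ldots, w_m\}$ is a chase path depends on $v_0$ only through the single equation $T(v_0) = 1 + (1-p_{v_0})T(w_1)$ together with $w_1 \in N(v_0)$. In particular, if $\{w_1,\ldots,w_m\}$ is any chase path and $v_0$ satisfies $T(v_0) = 1 + (1-p_{v_0})T(w_1)$ with $w_1 \in N(v_0)$, then prepending $v_0$ produces a chase path of length $m$.

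Now for the main argument, suppose $\{v_1,\ldots,v_k\}$ were not a shortest chase path starting from $v_1$. Then there exists a chase path $\{v_1, w_2, \ldots, w_j\}$ with $j-1 < k-1$, i.e.\ of strictly smaller length. Since $\{v_0, v_1, \ldots, v_k\}$ is a chase path, we have $v_1 \in N(v_0)$ and $T(v_0) = 1 + (1-p_{v_0})T(v_1)$. By the prepending observation above, $\{v_0, v_1, w_2, \ldots, w_j\}$ is a chase path starting from $v_0$, and its length $j < k$ contradicts the hypothesis that $\{v_0,\ldots,v_k\}$ is a shortest chase path from $v_0$.

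I expect no real obstacle here; the only thing to be careful about is keeping the length bookkeeping consistent (a path $\{u_0,\ldots,u_\ell\}$ has length $\ell$, so shortening the suffix by one strict amount shortens the whole by the same strict amount). The proof is essentially a one-line reduction to the previous lemma plus the observation that the chase-path condition at the head vertex is ``local'' in the sense of only referring to its immediate successor.
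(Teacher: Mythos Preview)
Your argument is correct: the suffix is a chase path by the preceding lemma, and if it admitted a strictly shorter chase path from $v_1$, prepending $v_0$ (which is legitimate because the chase-path condition at $v_0$ references only $v_1$) would yield a strictly shorter chase path from $v_0$, contradicting minimality.

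The paper itself offers no proof of this lemma; it is stated without a proof environment, presumably regarded as immediate from the definitions together with the preceding lemma. Your write-up simply spells out that immediate step. One minor remark: the paper's convention appears to count the number of vertices as the ``length'' of a chase path (cf.\ the proof of Lemma~\ref{lemma:nei}, where $\{v_1,\ldots,v_{k-1}\}$ is said to have length $k-1$), whereas you use the edge-count convention. This does not affect the validity of your argument, since ``strictly shorter'' is preserved under either convention, but you may want to align conventions if integrating your proof into the paper.
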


\begin{lemma}
	\label{lemma:nei}
	If $v$ has shortest chase length $k>1$, then it has a neighbor $u\neq v$ with shortest chase length
	$k-1$ such that $u$ follows $v$ in the shortest chase path.
\end{lemma}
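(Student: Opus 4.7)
The plan is to read off the required neighbor directly from the shortest chase path starting at $v$, using Lemma \ref{lemma:prefix} to certify that the suffix is itself shortest.

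First I would let $\{v=v_0,v_1,\ldots,v_k\}$ be a shortest chase path from $v$ and set $u := v_1$. By the definition of chase path, $v_1\in N(v_0)=N(v)$, so $u$ is a neighbor of $v$ (in the augmented sense used in the paper).

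Next I would invoke Lemma \ref{lemma:prefix} to conclude that $\{v_1,v_2,\ldots,v_k\}$ is itself a shortest chase path, hence $v_1=u$ has shortest chase length exactly $k-1$. The step that needs a separate argument — and this is the only mild obstacle — is showing $u\neq v$. Suppose for contradiction that $v_1=v$. Then $\{v_1,v_2,\ldots,v_k\}$ would be a shortest chase path \emph{starting from $v$} of length $k-1<k$, contradicting the hypothesis that the shortest chase length of $v$ equals $k$. Therefore $u\neq v$, as required.

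Finally, the clause ``$u$ follows $v$ in the shortest chase path'' is immediate from the construction, since $u=v_1$ is literally the vertex after $v_0=v$ in the chosen shortest chase path. So the entire lemma reduces to picking $u=v_1$ and combining Lemma \ref{lemma:prefix} with the minimality of $k$ to exclude the degenerate case $v_1=v$.
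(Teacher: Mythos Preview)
Your proof is correct and follows essentially the same approach as the paper: take a shortest chase path from $v$, set $u=v_1$, and apply Lemma~\ref{lemma:prefix} to get that $u$ has shortest chase length $k-1$. You even add an explicit argument for $u\neq v$, which the paper's proof omits; note only that your indexing $\{v_0,\ldots,v_k\}$ treats length as the number of edges, whereas the paper writes the path as $\{v_0,\ldots,v_{k-1}\}$ (length $=$ number of vertices), a harmless convention difference.
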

\begin{proof}
	Let path $\{v_0=v,v_1,\ldots,v_{k-1}\}$ be a shortest chase path. By Lemma
	\ref{lemma:prefix}, $\{v_1,\ldots,v_{k-1}\}$ is a shortest chase path of length $k-1$. By
	definition $v_1\in N(v)$, therefore $u=v_1$.
\end{proof}

\begin{lemma}
	\label{lemma:induction}
	If $v$ has shortest chase length $k$, then $T_k(v)=T(v)$ and the computed $\pi(v)\in N(v)$ remains
	unchanged thereafter satisfying
	$$
	T(v)=1+(1-p_v)T(\pi(v)).
	$$
\end{lemma}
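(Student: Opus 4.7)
The plan is to prove the claim by strong induction on the shortest chase length $k$ of $v$. In fact it will be convenient to prove a slightly stronger statement: for every $i \geq 1$ and every $w$ with shortest chase length at most $i$, we have $T_i(w) = T(w)$ and the stored $\pi(w)$ already satisfies $T(w) = 1 + (1-p_w)T(\pi(w))$. The lemma is then the special case $i = k$, and the ``remains unchanged'' clause follows by continuing the induction past $k$.

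For the base case $k=1$, a shortest chase path of length $1$ forces $T(v) = 1 + (1-p_v)T(v)$, hence $T(v) = 1/p_v$; this matches the initialization $T_1(v) = 1/p_v$, $\pi(v) = v \in N(v)$. For the inductive step with $k > 1$, I would apply Lemma \ref{lemma:nei} to produce a neighbor $u \neq v$ of shortest chase length $k-1$ with $T(v) = 1 + (1-p_v)T(u)$, and by the first lemma in the section $T(u) = \min_{w \in N(v)} T(w)$. The induction hypothesis gives $T_{k-1}(u) = T(u)$, while Lemma \ref{lemma:achievable} gives $T_{k-1}(w) \geq T(w) \geq T(u)$ for all $w \in N(v)$. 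Sandwiching yields $\min_{w \in N(v)} T_{k-1}(w) = T(u)$, so the algorithm's tentative value $1 + (1-p_v)\min_{w \in N(v)} T_{k-1}(w)$ equals $T(v)$; combined with $T_{k-1}(v) \geq T(v)$ this forces $T_k(v) = T(v)$, and any argmin $w^\ast$ chosen as $\pi(v)$ satisfies $T(w^\ast) \leq T_{k-1}(w^\ast) = T(u) = \min_{w \in N(v)} T(w)$, i.e. $T(w^\ast) = T(u)$, giving the required identity $T(v) = 1 + (1-p_v)T(\pi(v))$.

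The stability part is immediate from Lemma \ref{lemma:achievable}: at any later iteration $i > k$, $T_{i-1}(w) \geq T(w)$ implies $1 + (1-p_v)\min_{w \in N(v)} T_{i-1}(w) \geq T(v) = T_i(v)$, so the strict inequality guarding the update fails and neither $T(v)$ nor $\pi(v)$ is overwritten. The subtle point I expect to be the main obstacle is the case in which $T_{j}(v) = T(v)$ is achieved already for some $j < k$; one must check that whenever the $T$-value first hits $T(v)$ at such a $j > 1$, the stored $\pi(v)$ is a correct argmin. This is handled by repeating the sandwich argument at iteration $j$: since we assumed $k(v) > 1$, we have $T_1(v) = 1/p_v > T(v)$, so the first tight iteration $j$ must execute the update branch, and its assignment $\pi(v) = \arg\min_{w \in N(v)} T_{j-1}(w)$ selects a $w^\ast$ with $T(w^\ast) = \min_{w \in N(v)} T(w)$, exactly as in the inductive step above.
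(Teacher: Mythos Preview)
Your proof is correct and follows essentially the same route as the paper's: induction on the shortest chase length $k$, using Lemma~\ref{lemma:nei} to find a neighbor $u$ with $T_{k-1}(u)=T(u)$ and Lemma~\ref{lemma:achievable} to bound $T_k(v)$ from below. Your treatment is in fact more careful than the paper's about the $\pi(v)$ bookkeeping---the paper simply declares the identity for $\pi(v)$ ``obvious,'' whereas you explicitly handle the case where $T_j(v)$ first equals $T(v)$ at some $j<k$ and verify that the argmin chosen there still satisfies $T(v)=1+(1-p_v)T(\pi(v))$.
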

\begin{proof}
	For $k=1$ the statement holds because if a vertex has shortest chase length $1$, then
	$$
	p_v=\max_{u\in N(v)}p_u
	$$
	as otherwise moving to a neighbor $w$ of $v$ with higher probability is better than staying
	at $v$ for good.\\

	For a vertex $v$  with shortest chase length $k>1$, by Lemma \ref{lemma:nei} it has a
	neighbor $u\neq v$ with shortest chase length $k-1$. By induction assumption
	$T_{k-1}(u)=T(u)$,
	\begin{align*}
		T_{k}(v)&=1+(1-p_v)\min_{w\in N(v)}T_{k-1}(w) \\
			&\leq 1+(1-p_v)T_{k-1}(u)=1+(1-p_v)T(u) \\
			&=T(v)
	\end{align*}
	The last equality stems from the fact that $u$ follows $v$ in the shortest chase path.
	On the other hand, by Lemma \ref{lemma:achievable}, $T_k(v)\geq T(v)$, so $T_k(v)=T(v)$. It
	is obvious from the description of our algorithm that $\pi(v)$ satisfies
	$$
	T(v)=1+(1-p_v)T(\pi(v)).
	$$
	Moreover $\pi(v)$ is only updated in round $j+1$ when $T_{j+1}(v)<T_j(v)$, so as
	$T_{k}(v)=T_{k+1}(v)=\ldots$, $\pi(v)$ does not change beyond round $k$.
\end{proof}


\begin{lemma}
	\label{lemma:simple_path}
	A shortest chase path is simple.
\end{lemma}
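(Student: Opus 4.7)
The plan is to argue by contradiction, leveraging Lemma \ref{lemma:prefix} (every suffix of a shortest chase path is itself a shortest chase path) together with the observation that the shortest chase length is a well-defined number associated to each vertex. If a shortest chase path repeated a vertex, the two corresponding suffixes would both be shortest chase paths from that vertex but of different lengths, which is impossible.

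Concretely, suppose for contradiction that some shortest chase path $P=\{v_0,v_1,\ldots,v_k\}$ is not simple, so that $v_i=v_j$ for some indices $0\leq i<j\leq k$. First I would iterate Lemma \ref{lemma:prefix} to conclude that both tail paths $\{v_i,v_{i+1},\ldots,v_k\}$ and $\{v_j,v_{j+1},\ldots,v_k\}$ are shortest chase paths. Since $v_i=v_j$, these are two shortest chase paths starting at the \emph{same} vertex, and hence they must both realize the shortest chase length of that common vertex. But their lengths are $k-i$ and $k-j$, which differ because $i<j$, giving the desired contradiction.

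The argument is essentially a one-line consequence of Lemma \ref{lemma:prefix}, so there is no real obstacle beyond pinning down the definitions: one needs only to note that "shortest chase length" is genuinely a single number per starting vertex (it is the minimum of chase path lengths from that vertex), and that Lemma \ref{lemma:prefix} can be applied repeatedly to strip off initial vertices one at a time. Neither point requires additional work, so the proof will be short.
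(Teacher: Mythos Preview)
Your argument is correct. It differs from the paper's proof, which is slightly more direct: the paper observes that if a shortest chase path $P$ visits some vertex twice, then excising the segment from the first occurrence up to (but not including) the second occurrence yields another chase path from the same starting vertex that is strictly shorter than $P$, an immediate contradiction. In other words, the paper shortens the \emph{original} path by cutting out the loop, whereas you compare two \emph{suffixes} of $P$ via Lemma~\ref{lemma:prefix}. Both arguments exploit the same underlying idea (a repeated vertex lets one produce a shorter chase path from somewhere), but the paper's version is self-contained, while yours leans on Lemma~\ref{lemma:prefix}; on the other hand, your version avoids having to verify that the spliced path is still a chase path at the cut point. One cosmetic point: under the paper's length convention (length $=$ number of vertices), the two suffixes have lengths $k-i+1$ and $k-j+1$, not $k-i$ and $k-j$, though of course they still differ.
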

\begin{proof}
	Suppose on the contrary, that a shortest chase path $P$ passes some vertex $v$ twice. Then
	deleting the first occurrence of $v$ up to but excluding its second occurrence gives
	another chase path which is shorter than $P$.
\end{proof}
\begin{theorem}
	The algorithm in Section \ref{sec:bellman_ford_algorithm} correctly computes $\pi(v)$ for all $v\in
	V(G)$ in $O(|V(G)||E(G)|)$ time.
\end{theorem}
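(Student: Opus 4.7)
The plan is to decouple correctness from running time. For correctness, I would apply Lemma \ref{lemma:induction}, which guarantees $\pi(v)$ is set correctly once the iteration index reaches $v$'s shortest chase length, together with Lemma \ref{lemma:simple_path}, which bounds any shortest chase length by $|V(G)|$. This yields the correct $\pi(v)$ for every vertex after $|V(G)|$ iterations, provided every vertex actually admits a chase path in the first place.

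The main obstacle is filling that proviso: one must show every $v \in V(G)$ has a chase path. I would define the directed policy graph $H$ on $V(G)$ with arcs $v \to u$ whenever $u \in N(v)$ attains the minimum in $T(v) = 1+(1-p_v)\min_{w \in N(v)} T(w)$; by the first lemma of Section 2.1.1 every vertex has at least one such outgoing arc. Picking a terminal strongly connected component $C$ of $H$ and letting $v^* \in \arg\min_{v \in C} T(v)$, every arc leaving $v^*$ must stay inside $C$, so its target $u^*$ satisfies $T(u^*) \geq T(v^*)$; substituting into $T(v^*) = 1+(1-p_{v^*})T(u^*)$ yields $T(v^*) \geq 1/p_{v^*}$, while the stay-forever strategy gives $T(v^*) \leq 1/p_{v^*}$, forcing equality and thus a self-loop at $v^*$ in $H$. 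From any vertex $v$ one can reach some such fixed point along $H$, and deleting repeats from the walk produces a simple $H$-path of at most $|V(G)|$ vertices, which by construction is a chase path from $v$.

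For running time, each iteration scans every edge a constant number of times and costs $O(|V(G)|+|E(G)|)$. By the correctness argument, after $|V(G)|$ iterations every $T_i(v)$ already equals $T(v)$; on the next iteration the Bellman recurrence $T(v) = 1+(1-p_v)\min_{u \in N(v)}T(u)$ guarantees no value changes, $update$ remains false, and the loop returns. Multiplying the per-iteration cost by the $O(|V(G)|)$ iteration bound gives the claimed $O(|V(G)||E(G)|)$ total time; everything past the chase-path existence argument is straightforward bookkeeping on top of the lemmas already proved.
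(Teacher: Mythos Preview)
Your proof follows the paper's: combine Lemma~\ref{lemma:simple_path} (shortest chase paths are simple, hence of length at most $n=|V(G)|$) with Lemma~\ref{lemma:induction} to conclude that every $T_i(v)$ and $\pi(v)$ has stabilized by round $n$, so the algorithm halts no later than round $n+1$ and the $O(|E(G)|)$ per-round cost gives the stated bound. The one substantive difference is that you explicitly prove every vertex admits a chase path, via a terminal-SCC/fixed-point argument on the argmin policy graph $H$; the paper simply applies Lemma~\ref{lemma:induction} as though every vertex has a well-defined shortest chase length, without ever establishing that a chase path from $v$ exists. Your addition is correct and closes a gap the paper leaves implicit; apart from that the two arguments coincide.
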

\begin{proof}
	By Lemma \ref{lemma:simple_path}, all shortest chase paths have lengths less than or equal
	to $n$. So by Lemma \ref{lemma:induction} after round $n$ the computed $\pi(v)$ satisfies 
	$$
	T(v)=1+(1-p_v)T(\pi(v))
	$$
	and $T_n(v)=T_{n+1}(v)=T_{n+2}(v)=\ldots$, i.e. the algorithm terminates no later than
	round $n+1$. Each round takes $O(|E(G)|)$ time, so the overall time complexity is
	$O(|V(G)||E(G)|)$.
\end{proof}

\subsection{$O(|E(G)|+|V(G)|\log |V(G)|)$ Algorithm}
\label{sec:dijkstra_algorithm}

We present another algorithm computing optimal cop strategy. The time complexity is $O(|E(G)|+|V(G)|\log |V(G)|)$, so it is more suitable for dense graphs. Analogous to Dijkstra's algorithm\cite{dijkstra} for single source shortest path, in each of the $|V(G)|$ iterations we only update the strategy and chase time of vertices that links directly to a specific vertex.

\begin{algorithm}
\begin{algorithmic}
	\ForAll {$v\in V(G)$}
		\State {$t(v)=1/p_v$, $\pi(v)=v$}
	\EndFor

	\State {$S=V(G)$}
	\While {$|S|>0$}
		\State {$u=\arg\min_{v\in S}t(v)$}
		\State {$S=S-u$}
		\ForAll {$w\in S$ such that $(w,u)\in E(G)$}
			\If {$t(w)>1+(1-p_w)t(u)$}
				\State {$t(w)=1+(1-p_w)t(u)$}
				\State {$\pi(w)=u$}
			\EndIf
		\EndFor
	\EndWhile
\end{algorithmic}
\end{algorithm}

\subsubsection{Analysis}
In this section we show the correctness and efficiency of this algorithm.\\

\begin{lemma}
	\label{lemma:equality}
	If $u$ follows $v$ in a chase path and $T(u)=T(v)$, then $p_u=p_v$ and $T(u)=T(v)=1/p_u$.
\end{lemma}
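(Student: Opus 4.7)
The proof breaks into a brief algebraic computation and a structural argument based on the chase path suffix.

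The first step is to instantiate the chase path identity at $v$: since $u$ immediately follows $v$, the definition of chase path gives
\[
T(v) = 1 + (1-p_v)\,T(u).
\]
Substituting the hypothesis $T(u)=T(v)$ into the right-hand side yields $T(v) = 1 + (1-p_v)T(v)$, i.e., $p_v T(v) = 1$, so $T(u) = T(v) = 1/p_v$. This already establishes the first of the two equalities that together form the conclusion.

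To conclude $p_u = p_v$, the easy direction $p_u \le p_v$ is immediate: the cop can always remain at $u$ forever for an expected chase time of $1/p_u$, so $T(u) \le 1/p_u$, which combined with $T(u)=1/p_v$ yields $p_u \le p_v$. For the reverse $p_u \ge p_v$, my plan is to invoke Lemma~3 to pass to a chase path whose initial vertex is $u$, and then exploit the terminal identity $T(v_k)=1/p_{v_k}$ together with the fact that $T$ is weakly non-increasing along a chase path. The monotonicity comes from Lemma~1 applied at each intermediate $v_i$: since $v_i \in N(v_i)$, the successor realizing $T(v_{i+1}) = \min_{w\in N(v_i)} T(w)$ must satisfy $T(v_{i+1}) \le T(v_i)$. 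Chaining these inequalities against the rigid starting value $T(u)=1/p_v$ should force $T(u)=1/p_u$, and hence $p_u = p_v$.

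The main obstacle is this last step: the algebraic identity only couples $T(u)$ to $p_v$, whereas the stronger identity $T(u)=1/p_u$ requires showing that staying at $u$ is itself optimal. The delicate point is ruling out a strict decrease $T(u) > T(\text{next vertex})$ along the chase path past $u$ under the hypothesis $T(u)=1/p_v$; this should follow by propagating the Bellman relations along the suffix and comparing with the terminal value $1/p_{v_k}$, but requires a careful case analysis on the length of the suffix chase path.
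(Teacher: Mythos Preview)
The paper states this lemma without proof, so there is no argument to compare against. Your algebraic step is sound: from the chase-path identity together with $T(u)=T(v)$ one immediately gets that the \emph{earlier} of the two vertices, say $x$, satisfies $T(x)=1/p_x$. (Be aware that the paper's usage of ``follows'', visible in the proof of Lemma~\ref{lemma:long_to_short}, is the reverse of yours: there ``$v$ follows $u$'' is taken to mean $T(v)=1+(1-p_v)T(u)$, i.e.\ $v$ is the earlier vertex. Under that reading the algebra yields $T(u)=1/p_u$ directly, matching the stated conclusion.)

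The genuine difficulty is the claim $p_u=p_v$. Your easy inequality is correct, but the ``delicate'' reverse inequality cannot be completed, because that part of the claim is false in general. Consider the path graph on $u,v,w,z$ (edges $uv$, $vw$, $wz$) with
\[
p_u=\tfrac{3}{10},\quad p_v=\tfrac{1}{10},\quad p_w=\tfrac{27}{70},\quad p_z=\tfrac{3}{14}.
\]
One checks $T(w)=70/27$, $T(z)=82/27$, and $T(u)=T(v)=10/3$, and that $\{u,v,w\}$ is a chase path: $T(u)=1+(1-p_u)T(v)$, $T(v)=1+(1-p_v)T(w)$, and $T(w)=1/p_w$. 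Here $T(u)=T(v)=1/p_u$ while $p_u\neq p_v$. Intuitively, once $T(u)=1/p_u$ the cop is indifferent between staying at $u$ and moving on; a continuation through a low-probability neighbour $v$ that itself reaches a good vertex $w$ can still be optimal, and nothing forces $p_v=p_u$.

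So the gap you sensed is real and cannot be closed as the lemma is stated. What does survive---and what is actually invoked in the proof of Lemma~\ref{lemma:alt_correctness}---is only the identity $T=1/p$ at the earlier vertex, which your first paragraph already establishes.
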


A chase path always goes from long to short expected chase time.
\begin{lemma}
	\label{lemma:long_to_short}
	If $v$ follows $u$ in a chase path, then $T(v)\geq T(u)$.
\end{lemma}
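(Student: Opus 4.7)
The plan is to reduce the claim to the case of two consecutive vertices on the chase path and then combine the chase-path recurrence with the elementary bound $T(w)\leq 1/p_w$ that comes from the ``stay at $w$ forever'' strategy.

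First, by induction on the gap between the positions of $u$ and $v$ along the chase path, it is enough to establish the inequality when $v$ and $u$ are adjacent, say $v=v_i$ and $u=v_{i+1}$ in the notation of the chase path $\{v_0,\ldots,v_k\}$. Chaining the resulting inequalities along the path then handles the general case; the tail vertex $v_k$ contributes a trivial reflexive comparison with itself.

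For the consecutive case, the chase-path recurrence gives $T(v)=1+(1-p_v)T(u)$, hence
\[
T(v)-T(u) \;=\; 1 - p_v\,T(u).
\]
So the problem reduces to proving $T(u)\leq 1/p_v$. Two ingredients suffice. Since $u\in N(v)$ is the optimal successor of $v$ on the chase path and $v\in N(v)$ as well, Lemma 1 yields $T(u)\leq T(v)$. Next, staying at $v$ forever realises expected capture time $1/p_v$, so optimality of $T(v)$ forces $T(v)\leq 1/p_v$. Chaining these two inequalities gives $T(u)\leq T(v)\leq 1/p_v$, and substituting back into the displayed identity gives $T(v)-T(u)\geq 1-p_v\cdot(1/p_v)=0$, i.e.\ $T(v)\geq T(u)$.

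The only bookkeeping I expect to need care is at the tail: the self-loop condition $T(v_k)=1+(1-p_{v_k})T(v_k)$ pins down $T(v_k)=1/p_{v_k}$, which keeps the bound $T(w)\leq 1/p_w$ tight at the end of the path and makes the consecutive-pair argument apply uniformly, including to pairs whose later vertex is $v_k$. Beyond this minor endpoint check, the proof is a one-line calculation from the recurrence, so the main obstacle is simply identifying the right auxiliary bound $T(u)\leq 1/p_v$ and spotting that Lemma 1 delivers it via $v\in N(v)$.
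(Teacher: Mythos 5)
Your proof is correct and takes essentially the same route as the paper: both combine the chase-path recurrence $T(v)=1+(1-p_v)T(u)$ with the bound $T(v)\leq 1/p_v$ coming from the stay-at-$v$ strategy, the paper phrasing it as a contradiction and you arguing directly via $T(u)\leq T(v)\leq 1/p_v$. (Incidentally, your intermediate observation that $T(u)=\min_{w\in N(v)}T(w)\leq T(v)$ because $v\in N(v)$ already \emph{is} the desired conclusion, so the subsequent substitution back into the recurrence is redundant.)
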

\begin{proof}
	Suppose otherwise. Since $v$ follows $u$, $T(v)=1+(1-p_v)T(u)$, or $T(u)=(T(v)-1)/(1-p_v)>T(v)$. So $T(v)>1/p_v$, a contradiction.
\end{proof}

\begin{lemma}
	\label{lemma:alt_correctness}
	For every $u\in V(G)-S$, $t(u)=T(u)$.
\end{lemma}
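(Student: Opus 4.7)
The plan is to induct on the order in which vertices are extracted from $S$, paralleling the standard Dijkstra frontier argument with chase paths replacing shortest paths. First I would establish the analogue of Lemma \ref{lemma:achievable} for this algorithm: throughout the execution and for every $v\in V(G)$, $t(v)\geq T(v)$. This holds because the initial value $1/p_v$ is achievable (stay at $v$ forever), and each update $t(w)\leftarrow 1+(1-p_w)t(u)$ replaces $t(w)$ by the expected time of a concrete strategy (from $w$, after a missed round move to $u$ and then follow the strategy witnessing the current value of $t(u)$). Hence it suffices to show $t(u)\leq T(u)$ at the instant $u$ is extracted.

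Fix that instant, with $u=\arg\min_{v\in S}t(v)$, and pick any chase path $u=v_0,v_1,\ldots,v_k$. By Lemma \ref{lemma:long_to_short},
$$T(u)=T(v_0)\geq T(v_1)\geq \cdots \geq T(v_k)=1/p_{v_k}.$$
The key step is to exhibit an index $i$ with $v_i\in S$ at this instant and $t(v_i)=T(v_i)$; then minimality of $t(u)$ on $S$ yields $t(u)\leq t(v_i)=T(v_i)\leq T(u)$, which combined with $t(u)\geq T(u)$ closes the induction.

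I would then split into two cases. If every $v_j$ with $j\geq 1$ still lies in $S$, take $v_i=v_k$: any relaxation of $t(v_k)$ would yield a value at least $T(v_k)=1/p_{v_k}$ by achievability, but $t(v_k)$ is monotonically non-increasing starting from $1/p_{v_k}$, so no relaxation has actually decreased it, and $t(v_k)=T(v_k)$. Otherwise let $j\geq 1$ be the smallest index with $v_j\notin S$; then $v_{j-1}$ is in $S$ now and hence was also in $S$ at the moment $v_j$ was extracted. By the inductive hypothesis $t(v_j)=T(v_j)$ at that moment, and the algorithm relaxed the edge $(v_{j-1},v_j)$, giving $t(v_{j-1})\leq 1+(1-p_{v_{j-1}})T(v_j)=T(v_{j-1})$; combined with achievability this forces $t(v_{j-1})=T(v_{j-1})$, so I take $v_i=v_{j-1}$.

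The main subtlety I anticipate is the scheduling point hiding in the second case: the edge $(v_{j-1},v_j)$ is relaxed only at the moment $v_j$ is extracted, and that relaxation helps only if $v_{j-1}$ is in $S$ at that earlier moment. This is handled by choosing $j$ minimal (so $v_{j-1}\in S$ now) together with the fact that once a vertex leaves $S$ it never returns, so $v_{j-1}\in S$ at every prior step as well. Once this scheduling point is settled, the remainder of the proof is a direct analogue of the classical Dijkstra invariant.
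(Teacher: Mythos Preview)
Your proof is correct and follows essentially the same Dijkstra-style frontier argument as the paper: induct on extractions, invoke achievability for the lower bound $t\geq T$, and analyze a chase path from the extracted vertex by splitting on whether the path meets $V(G)-S$. Your organization is in fact slightly cleaner than the paper's, since by working directly with the endpoint $v_k$ in the disjoint case and the predecessor $v_{j-1}$ of the first already-extracted vertex in the other case you avoid the paper's appeal to Lemma~\ref{lemma:equality}.
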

\begin{proof}
	It clearly holds when $|S|=n$. Suppose the statement holds for $|S|=k$, and assume $|S|=k-1>0$ at the beginning of some iteration of the while loop. Let $w=\arg\min_{u\in S}t(u)$. It suffices to show that $t(w)=T(w)$.\\

	Assume $t(w)>T(w)$. There exists a chase path $P=\{w,u_0,\ldots,u_m\}$. If $P$ and $V(G)-S$ are disjoint, then by Lemma \ref{lemma:long_to_short}, $T(w)=T(u_0)=\ldots=T(u_m)$. Since $u_m$ is the end of the chase path $T(u_m)=1/p_{u_m}$, a contradiction as $t(u_m)\leq 1/p_{u_m}$. If $v\in P$ is the vertex closest to $w$ in $P$ such that $v\in V(G)-S$, let $u$ follows $v$ in $P$ and so $u\in S$. By Lemma \ref{lemma:long_to_short} $T(u)=T(w)$. If $u=w$, then $t(w)$ has already been updated to $1+(1-p_w)t(v)=1+(1-p_w)T(v)=T(w)$. If $u\neq w$, then by Lemma \ref{lemma:equality}, $T(w)=1/p_w\geq t(w)$.\\

	We are then done here, because the computed $t(w)$ is always achievable, i.e. $t(w)\geq T(w)$.
\end{proof}

\begin{theorem}
	The algorithm correctly computes $T(v)$ and optimal strategy for all $v\in
	V(G)$ in $O(|E(G)|+|V(G)|\log |V(G)|)$ time. 
\end{theorem}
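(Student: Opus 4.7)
The plan is to split the theorem into a correctness claim and a runtime claim, leaning heavily on Lemma \ref{lemma:alt_correctness} for the first and on a standard Fibonacci heap analysis for the second.

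For correctness, I would first observe that once the while loop terminates we have $S = \emptyset$, so every vertex lies in $V(G) - S$. Lemma \ref{lemma:alt_correctness} then gives $t(v) = T(v)$ for every $v \in V(G)$. To convert this into a statement about $\pi(v)$, I would argue that at the moment $v$ is removed from $S$, either $\pi(v) = v$ (in which case $t(v) = 1/p_v$ was never decreased, forcing $T(v) = 1/p_v = 1 + (1-p_v)T(v)$), or $\pi(v) = u$ for some $u$ that was extracted earlier; in the latter case the last update set $t(v) = 1 + (1-p_v) t(u)$, and since $u \in V(G) - S$ already satisfies $t(u) = T(u)$ by Lemma \ref{lemma:alt_correctness}, we obtain $T(v) = 1 + (1-p_v)T(\pi(v))$, which by Lemma 1 certifies that moving to $\pi(v)$ is optimal.

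For the running time, I would implement $S$ as a Fibonacci heap keyed by $t(\cdot)$. Initialization costs $O(|V(G)|)$ insertions. The while loop performs exactly $|V(G)|$ extract-min operations, each $O(\log |V(G)|)$ amortized, totalling $O(|V(G)| \log |V(G)|)$. Each edge $(w,u)$ is examined exactly once, namely when its endpoint $u$ is extracted, and triggers at most one decrease-key operation, each $O(1)$ amortized; this contributes $O(|E(G)|)$. Summing gives the claimed $O(|E(G)| + |V(G)| \log |V(G)|)$ bound.

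The only genuinely delicate step is bridging "$t(v) = T(v)$" with "$\pi(v)$ is optimal"; here I would be careful to rule out the edge case that $\pi(v)$ was last updated using a stale value of $t(u)$. This is handled by noting that the algorithm only updates $\pi(w)$ at the moment it also writes $t(w) = 1 + (1-p_w) t(u)$, and at that moment $u$ has just been removed from $S$ so its $t(u)$ value is already equal to $T(u)$ by the induction in Lemma \ref{lemma:alt_correctness}; hence the final pair $(t(v), \pi(v))$ is internally consistent with the recursion of Lemma 1. Everything else is routine, so I do not anticipate any further obstacle.
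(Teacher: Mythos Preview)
Your proposal is correct and follows essentially the same approach as the paper: invoke Lemma~\ref{lemma:alt_correctness} for correctness and a Fibonacci heap for the running time. You go further than the paper by explicitly verifying that the final $\pi(v)$ satisfies the optimality recursion of Lemma~1, a point the paper leaves implicit; this extra care is sound and welcome.
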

\begin{proof}
	The correctness follows Lemma \ref{lemma:alt_correctness} immediately. To achieve $O(|E|+|V(G)|\log |V(G)|)$ time complexity, $S$ could be kept as a Fibonacci heap\cite{fibonacci_heap} with $O(1)$ amortized element update time and $O(\log |V(G)|)$ element removal time.
\end{proof}

\end{document}